\newtheorem{thm}{Theorem}[section]
\theoremstyle{definition}
\newtheorem{cor}[thm]{Corollary}
\newtheorem{prop}[thm]{Proposition}
\newtheorem{defn}[thm]{Definition}
\newtheorem{lem}[thm]{Lemma}
\newtheorem{ex}[thm]{Example}
\numberwithin{equation}{section}
\begin{document}

\title[Fully $S$-idempotent modules]{Fully $S$-idempotent modules}
\author{Faranak Farshadifar}
\address{Department of Mathematics, Farhangian University, Tehran, Iran.}
\email{f.farshadifar@cfu.ac.ir}

\begin{abstract}
Let $R$ be a commutative ring with identity and $S$ be a multiplicatively closed subset of $R$.
The aim of this paper is to introduce the notion of fully $S$-idempotent modules as a generalization of fully idempotent modules and investigate some properties of this class of modules.
\end{abstract}

\subjclass[2010]{13C13, 13A15}%
\keywords {Idempotent submodule, fully idempotent module, multiplicatively closed subset, $S$-idempotent submodule, fully $S$-idempotent module}

\maketitle

\section{\bf Introduction}
\vskip 0.4 true cm
Throughout this paper $R$ will denote a commutative ring with
identity and $\Bbb Z$ will denote the ring of integers.

Let $M$ be an $R$-module. $M$ is said to be a \emph{multiplication module} \cite{Ba81} if for every submodule $N$ of $M$, there exists an ideal $I$ of $R$ such that $N=IM$. It is easy to see that $M$ is a multiplication module if and only if $N=(N:_RM)M$ for each submodule $N$ of $M$.
 A submodule $N$ of $M$ is said to be \emph{idempotent} if $N=(N:_RM)^2M$.  Also, $M$ is said to be \emph{fully idempotent} if every submodule of $M$ is idempotent \cite{AF122}.

Let $S$ be a multiplicatively closed subset of $R$. In \cite{ATUS2020}, the authors introduced and investigated the concept of $S$-multiplication modules as a generalization of multiplication modules.  An $R$-module $M$ is said to be an \emph{$S$-multiplication module} if for each submodule $N$ of $M$, there exist $s \in S$ and an ideal $I$ of $R$
such that $sN \subseteq IM \subseteq N$ \cite{ATUS2020}.

Let $S$ be a multiplicatively closed subset of $R$. In this paper, we introduce the notions of fully $S$-idempotent $R$-modules as a generalization of fully idempotent modules and provide some useful information concerning theis new class of modules.
We say that a submodule $N$ of an $R$-module $M$ is an \emph{$S$-idempotent submodule} if there exists an $s \in S$ such that $sN\subseteq  (N:_RM)^2M \subseteq N$ (Definition \ref{d2.1}).
We say that an $R$-module $M$ is a \emph{fully $S$-idempotent module} if every submodule of $M$ is an $S$-idempotent submodule (Definition \ref{d2.2}).
Clearly every fully idempotent $R$-module is a fully $S$-idempotent $R$-module (Proposition \ref{p2.3}). Example \ref{e2.4} shows that the converse is not true in general.
In Theorem \ref{t2.8}, we characterize the fully idempotent $R$-modules.
Also, we characterize the fully $S$-idempotent $R$-modules, where $S$ satisfying the maximal multiple condition (Theorem \ref{t2.11}).  Let $M_i$ be an $R_i$-module for $i=1,2,..,n$ and let $S_1, ..., S_n$ be multiplicatively closed subsets of
$R_1, ..., R_n$, respectively. Assume that $M = M_1\times ...\times M_n$, $R = R_1\times ...\times R_n$ and
$S = S_1\times ...\times S_n$. Then we show that the following statements are equivalent.
\begin{itemize}
\item [(a)] $M$ is a fully $S$-idempotent module.
\item [(b)] $M_i$ is a fully $S_i$-idempotent module for each $i\in \{1, 2, ..., n\}$.
\end{itemize}
Also, among other results, it is shown that (Theorem \ref{t2.18}) if
$M$ an $S$-multiplication $R$-module and $N$ be a submodule of $M$, then the following statements are equivalent.
\begin {itemize}
\item [(a)] $N$ is an $S$-pure submodule of $M$.
\item [(b)] $N$ is an $S$-multiplication $R$-module and
$N$ is an $S$-idempotent submodule of $M$.
\end {itemize}
Finally, we prove that
 if $M$ is a fully $S$-idempotent $R$-module,  then $M$ is a fully $S$-pure $R$-module. The converse holds if $M$ is an $S$-multiplication $R$-module (Corollary \ref{c2.20}).
\section{\bf Main results}
\begin{defn}\label{d12.1}
Let $S$ be a multiplicatively closed subset of $R$. We say that an element $x$ of an $R$-module $M$ is an \emph{$S$-idempotent element} if there exist $s \in S$ and $a \in (Rx:_RM)$ such that $sx=ax$.
\end{defn}

\begin{defn}\label{d2.1}
Let $S$ be a multiplicatively closed subset of $R$. We say that a submodule $N$ of an $R$-module $M$ is an \emph{$S$-idempotent submodule} if there exists an $s \in S$ such that $sN\subseteq  (N:_RM)^2M \subseteq N$.
\end{defn}

\begin{defn}\label{d2.2}
Let $S$ be a multiplicatively closed subset of $R$. We say that an $R$-module $M$ is a \emph{fully $S$-idempotent module} if every submodule of $M$ is an $S$-idempotent submodule.
\end{defn}

\begin{ex}\label{e2.6}
 Let $S$ be a multiplicatively closed subset of $R$ and $M$ be an $R$-module with $Ann_R(M) \cap S\not=\emptyset$. Then clearly, $M$ is a fully $S$-idempotent $R$-module.
\end{ex}

\begin{prop}\label{p2.3}
Let $S$ be a multiplicatively closed subset of $R$. Every fully idempotent $R$-module is a fully $S$-idempotent $R$-module. The converse is true if $S \subseteq U(R)$, where $U(R)$ is the set
of units in $R$.
\end{prop}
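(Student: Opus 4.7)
The plan is to unpack both implications directly from Definitions \ref{d2.1} and \ref{d2.2}, with essentially no machinery beyond the fact that $(N:_RM)^2M$ is an $R$-submodule and the assumption that $S$ is nonempty.

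For the forward direction, I would take $M$ fully idempotent and an arbitrary submodule $N$. By hypothesis $N=(N:_RM)^2M$, so the inclusion $(N:_RM)^2M\subseteq N$ is automatic, and for \emph{any} choice of $s\in S$ the inclusion $sN\subseteq N=(N:_RM)^2M$ is immediate since $N$ itself is $R$-stable. Thus the $S$-idempotent condition is satisfied for $N$, and this works for every submodule.

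For the converse, I would assume $S\subseteq U(R)$ and let $N$ be an arbitrary submodule of the fully $S$-idempotent module $M$. The definition furnishes some $s\in S$ with $sN\subseteq (N:_RM)^2M\subseteq N$. Because $s$ is a unit, it has an inverse $s^{-1}\in R$; multiplying the first inclusion by $s^{-1}$ and using that $(N:_RM)^2M$ is closed under the $R$-action, one obtains $N=s^{-1}(sN)\subseteq (N:_RM)^2M$. Combined with the opposite inclusion from the $S$-idempotent condition, this gives $N=(N:_RM)^2M$, so $M$ is fully idempotent.

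There is no real obstacle here; the entire content is the cancellation step in the converse, which is exactly what the hypothesis $S\subseteq U(R)$ provides. The only point worth being careful about is that in the forward direction one needs $S$ to be nonempty in order to produce the required witness $s$, which is built into the standing assumption that $S$ is multiplicatively closed (and typically contains $1$).
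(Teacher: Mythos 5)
Your proof is correct and is exactly the argument the paper has in mind (the paper simply writes ``This is clear''): the forward direction follows by taking any $s\in S$ since $N=(N:_RM)^2M$ already, and the converse follows by cancelling the unit $s$ against the inclusion $sN\subseteq (N:_RM)^2M$. Nothing further is needed.
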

\begin{proof}
This is clear.
\end{proof}

The following example shows that the converse of Proposition \ref{p2.3} is not true in general.
\begin{ex}\label{e2.4}
Take the $\Bbb Z$-module $M=\Bbb Z_{p^\infty}$ for a prime number $p$. Then we know that all proper submodules of $M$ are of the form $G_t=\langle 1/p^t+\Bbb Z\rangle$ for some $t \in \Bbb N \cup \{0\}$ and $(G_t:_{\Bbb Z}M)=0$. Therefore, $M$ is not a fully idempotent $\Bbb Z$-module. Now,
take the multiplicatively closed subset $S = \{p^n: n \in \Bbb N \cup \{0\}\}$ of $\Bbb Z$. Then $p^tG_t=0 \subseteq (G_t:_{\Bbb Z}M)^2M\subseteq G_t$. Hence, $G_t$ is an $S$-idempotent submodule of $M$ for each  $t \in \Bbb N \cup \{0\}$. So, $M$ is a fully $S$-idempotent $\Bbb Z$-module.
\end{ex}

\begin{lem}\label{l2.5}
Let $S$ be a multiplicatively closed subset of $R$. Then every fully $S$-idempotent $R$-module is an $S$-multiplication $R$-module.
\end{lem}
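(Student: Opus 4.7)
The plan is essentially a direct unfolding of the two definitions: the $S$-idempotent condition on $N$ is literally a special case of the $S$-multiplication condition, obtained by picking the witness ideal to be $I=(N:_R M)^2$.

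More concretely, I would start by letting $N$ be an arbitrary submodule of $M$. Since $M$ is fully $S$-idempotent, Definition \ref{d2.2} applied to $N$ produces an $s\in S$ such that
\[
sN \;\subseteq\; (N:_R M)^2 M \;\subseteq\; N.
\]
Setting $I:=(N:_R M)^2$, which is an ideal of $R$, the above chain reads $sN\subseteq IM\subseteq N$. This is precisely the defining condition for $N$ to satisfy the $S$-multiplication property. Since $N$ was an arbitrary submodule, $M$ is an $S$-multiplication $R$-module.

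There is no real obstacle here: the lemma is a one-line tautology once one observes that $(N:_R M)^2$ is an ideal and that the $S$-idempotent hypothesis is formally stronger than the $S$-multiplication hypothesis (the latter allows any ideal $I$, while the former prescribes the particular choice $I=(N:_R M)^2$). The only thing worth emphasizing in the write-up is that this choice of $I$ is always available, so no case analysis on $N$ is required.
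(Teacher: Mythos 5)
Your proof is correct and is essentially the same one-line unfolding of the definitions that the paper gives; the only cosmetic difference is that you take the witness ideal to be $I=(N:_RM)^2$ directly, whereas the paper uses $I=(N:_RM)$ after noting $(N:_RM)^2M\subseteq (N:_RM)M\subseteq N$. Either choice is valid.
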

\begin{proof}
Let $M$ be a fully $S$-idempotent $R$-module and $N$ be a submodule of $M$. Then there is an $s \in S$ such that
$sN\subseteq  (N:_RM)^2M \subseteq N$. This implies that
$$
sN\subseteq  (N:_RM)^2M= (N:_RM)(N:_RM)M \subseteq (N:_RM)M\subseteq N,
$$
as needed.
\end{proof}

The following example shows that the converse of Lemma \ref{l2.5}
is not true in general.

\begin{ex}\label{e2.7}
Take the multiplicatively closed subset $S=\Bbb Z \setminus 2\Bbb Z$ of $\Bbb Z$. Then
 $\Bbb Z_{4}$ is an $S$-multiplication
  $\Bbb Z$-module. But $\Bbb Z_{4}$ is not a fully $S$-idempotent $\Bbb Z$-module.
\end{ex}

Let $S$ be a multiplicatively closed subset of $R$. Recall that the saturation $S^*$ of $S$ is defined as $S^*=\{x \in R : x/1\  is \ a\ unit  \ of\ S^{-1}R \}$. It is obvious that $S^*$ is a multiplicatively closed subset of $R$ containing $S$ \cite{Gr92}.
\begin{prop}\label{p2.8}
Let $S$ be a multiplicatively closed subset of $R$ and $M$ be an R-module. Then we have the following.
\begin{itemize}
\item [(a)] If $S_1 \subseteq S_2$ are multiplicatively closed subsets of $R$ and $M$ is a fully $S_1$-idempotent $R$-module, then $M$ is a fully $S_2$-idempotent $R$-module.
\item [(b)] $M$ is a fully $S$-idempotent $R$-module if and only if $M$ is a fully $S^*$-idempotent $R$-module.
\item [(c)] If $M$ is a fully $S$-idempotent $R$-module, then every submodule of $M$ is a fully $S$-idempotent  $R$-module.
\end{itemize}
\end{prop}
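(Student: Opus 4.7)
The plan is to dispatch (a) and (b) quickly and reserve the real work for (c).

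Part (a) is immediate: any $s \in S_1$ witnessing $S_1$-idempotence of a submodule also lies in $S_2 \supseteq S_1$, so the inclusion $sN \subseteq (N:_RM)^2M \subseteq N$ doubles as an $S_2$-witness. For (b), the forward direction is (a) applied to $S \subseteq S^*$. For the converse I would invoke the standard characterization of the saturation: $x \in S^*$ if and only if $xr \in S$ for some $r \in R$. Given a submodule $N$ and $s^* \in S^*$ with $s^*N \subseteq (N:_RM)^2M \subseteq N$, pick $r$ so that $s^*r =: s \in S$ and multiply the inclusion through by $r$; since $(N:_RM)^2M$ is closed under scalar multiplication, $sN \subseteq (N:_RM)^2M \subseteq N$ remains valid.

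Part (c) is the substantive piece. Fix submodules $K \subseteq N \subseteq M$; the task is to upgrade the $S$-idempotence of $K$ inside $M$ to $S$-idempotence of $K$ inside $N$. The right-hand inclusion $(K:_RN)^2N \subseteq K$ is automatic from $(K:_RN)N \subseteq K$. For the left-hand inclusion, I would start from the hypothesis $sK \subseteq (K:_RM)^2M$ and contract the right side to $sK \subseteq (K:_RM)\,K$ using $(K:_RM)M \subseteq K$. Iterating once, i.e.\ multiplying through by $s$ and re-applying the same contraction to the resulting factor $sK$, gives $s^2K \subseteq (K:_RM)^2K$. The monotonicity $(K:_RM) \subseteq (K:_RN)$ together with $K \subseteq N$ then lands this inside $(K:_RN)^2N$, so $s^2 \in S$ is the required witness.

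The delicate point in (c) is that one cannot simply replace $M$ by $N$ in the idempotence inclusion: the colon ideal grows when passing from $M$ to $N$ while the ambient module shrinks, and these two movements pull in opposite directions, so naive substitution fails. The iteration trick—absorbing one factor of $(K:_RM)$ into $K$ before invoking the monotonicity of the colon ideal—is what makes both powers of the larger colon ideal $(K:_RN)$ available on the right, and it is the only nontrivial manoeuvre in the whole proposition.
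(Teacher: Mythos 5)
Your proposal is correct and follows essentially the same route as the paper: part (b) rests on the same characterization of the saturation (the paper extracts $us=uxa$ directly from the unit relation in $S^{-1}R$), and part (c) uses the same squaring-the-witness trick, with your chain $s^2K\subseteq (K:_RM)^2K\subseteq (K:_RN)^2N$ being a slightly cleaner rearrangement of the paper's $s^2K\subseteq (K:_RM)^3M\subseteq (K:_RN)^2(N:_RM)M\subseteq (K:_RN)^2N$.
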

\begin{proof}
(a) This is clear.

(b) Let $M$ be a fully $S$-idempotent $R$-module. Since $S\subseteq S^*$,  by part (a), $M$ is a fully $S^*$-idempotent $R$-module.
For the converse, assume that $M$ is a fully $S^*$-idempotent module and $N$ is a submodule of $M$.
Then there exists $x \in S^*$ such that $xN \subseteq (N :_R M)^2M$. As $x \in S^*$,  $x/1$ is a unit of $S^{-1}R$ and so $(x/1)(a/s)=1$ for some $a \in R$ and $s \in S$. This yields that $us = uxa$ for some $u \in S$. Thus we have $usN =uxaN \subseteq xN \subseteq (N :_R M)^2M$. Therefore, $M$ is a fully $S$-idempotent $R$-module.

(c) Let $N$ be a submodule of $M$ and $K$ be a submodule of $N$. By Lemma \ref{l2.5},
$M$ is an $S$-multiplication $R$-module. Hence there exists $s \in S$ such that $sK\subseteq (K:_RM)^2M \subseteq K$ implies
that
$$
s^2K\subseteq s(K:_RM)^2M \subseteq s(K:_RM)K\subseteq (K:_RM)(K:_RM)^2M \subseteq (K:_RM)^3M.
$$
Thus
$$
s^2K\subseteq (K:_RM)^3M \subseteq (K:_RN)^2(N:_RM)M \subseteq (K:_RN)^2N.
$$
Therefore, $N$ is fully $S$-idempotent.

\end{proof}

In the following theorem, we characterize the fully idempotent $R$-modules.
\begin{thm}\label{t2.8}
Let $M$ be an R-module. Then the following statements are equivalent:
\begin{itemize}
\item [(a)] $M$ is a fully idempotent $R$-module;
\item [(b)] $M$ is a fully $(R-\mathfrak{p})$-idempotent $R$-module for each prime ideal $\mathfrak{p}$ of $R$;
\item [(c)] $M$ is a fully $(R-\mathfrak{m})$-idempotent $R$-module for each maximal ideal $\mathfrak{m}$ of $R$;
\item [(d)] $M$ is a fully $(R-\mathfrak{m})$-idempotent $R$-module for each maximal ideal $\mathfrak{m}$ of $R$ with $M_{\mathfrak{m}}\not=0_{\mathfrak{m}}$.
\end{itemize}
\end{thm}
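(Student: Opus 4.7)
The plan is to show $(a) \Rightarrow (b) \Rightarrow (c) \Rightarrow (d) \Rightarrow (a)$. The first three implications are essentially free: $(a) \Rightarrow (b)$ is Proposition \ref{p2.3} applied with $S = R - \mathfrak{p}$; $(b) \Rightarrow (c)$ holds because every maximal ideal is prime; and $(c) \Rightarrow (d)$ is immediate by restricting the collection of maximal ideals. All the work lies in $(d) \Rightarrow (a)$.

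For $(d) \Rightarrow (a)$, I would fix an arbitrary submodule $N \leq M$ and aim to prove $N = (N:_R M)^2 M$. The inclusion $(N:_R M)^2 M \subseteq N$ is automatic, since $(N:_R M)^2 M = (N:_R M)((N:_R M)M) \subseteq (N:_R M) N \subseteq N$. So the content is in showing the reverse inclusion $N \subseteq (N:_R M)^2 M$. The natural tool is the local-global principle for submodule inclusion: it suffices to check $N_{\mathfrak{m}} \subseteq ((N:_R M)^2 M)_{\mathfrak{m}}$ for every maximal ideal $\mathfrak{m}$ of $R$.

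I would then split into two cases at each maximal ideal $\mathfrak{m}$. If $M_{\mathfrak{m}} = 0_{\mathfrak{m}}$, then also $N_{\mathfrak{m}} = 0$, so the containment holds trivially. If $M_{\mathfrak{m}} \neq 0_{\mathfrak{m}}$, hypothesis $(d)$ gives some $s \in R \setminus \mathfrak{m}$ with $sN \subseteq (N:_R M)^2 M$. Localizing at $\mathfrak{m}$, the element $s/1$ becomes a unit in $R_{\mathfrak{m}}$, so multiplication by $s$ is an automorphism of $N_{\mathfrak{m}}$, giving
\[
N_{\mathfrak{m}} = (sN)_{\mathfrak{m}} \subseteq \bigl((N:_R M)^2 M\bigr)_{\mathfrak{m}},
\]
as required.

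The potential obstacle is the case $M_{\mathfrak{m}} = 0_{\mathfrak{m}}$, which is exactly what forced the weaker condition $(d)$ rather than $(c)$: without the local-global reduction one might mistakenly try to produce a single $s \in R \setminus \mathfrak{m}$ annihilating $N$ into the desired submodule, which is not available when $M$ is not finitely generated. Handling this case through the local-global principle (where such $\mathfrak{m}$ contribute nothing) is the key device that makes $(d)$ strong enough to imply $(a)$.
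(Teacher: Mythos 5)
Your proof is correct and takes essentially the same approach as the paper: the first three implications are dispatched in the same way, and for $(d) \Rightarrow (a)$ the paper likewise checks $N_{\mathfrak{m}} = ((N:_R M)^2 M)_{\mathfrak{m}}$ at every maximal ideal $\mathfrak{m}$, splitting into the cases $M_{\mathfrak{m}} = 0_{\mathfrak{m}}$ (where the equality is trivial) and $M_{\mathfrak{m}} \neq 0_{\mathfrak{m}}$ (where $s \notin \mathfrak{m}$ becomes a unit after localization), then concludes by the local-global principle.
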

\begin{proof}
$(a) \Rightarrow (b)$
 Let $M$ be a fully idempotent $R$-module and $\mathfrak{p}$ be a prime ideal of $R$. Then $R-\mathfrak{p}$ is multiplicatively
closed set of $R$ and so $M$ is a fully $(R-\mathfrak{p})$-idempotent $R$-module by Proposition \ref{p2.3}.

$(b) \Rightarrow (c)$
Since every maximal ideal is a prime ideal, the result follows from the part (b).

$(c) \Rightarrow (d)$
This is clear.

$(d) \Rightarrow (a)$
Let $N$ be a submodule of $M$. Take a maximal ideal $\mathfrak{m}$ of $R$ with $M_{\mathfrak{m}}\not=0_{\mathfrak{m}}$. As
$M$ is a fully $(R-\mathfrak{m})$-idempotent module, there exists $s \not \in \mathfrak{m}$ such that $sN\subseteq  (N:_RM)^2M$. This implies that
$$
N_{\mathfrak{m}}=(sN)_{\mathfrak{m}}\subseteq  ((N:_RM)^2M)_{\mathfrak{m}}\subseteq N_{\mathfrak{m}}.
$$
If $M_{\mathfrak{m}}=0_{\mathfrak{m}}$, then clearly $N_{\mathfrak{m}}=((N:_RM)^2M)_{\mathfrak{m}}$. Thus we conclude
that $N_{\mathfrak{m}}=((N:_RM)^2M)_{\mathfrak{m}}$ for each maximal ideal $\mathfrak{m}$ of $R$. It follows that $N=(N:_RM)^2M$,  as needed.
\end{proof}

\begin{prop}\label{p2.9}
Let $S$ be a multiplicatively closed subset of $R$ and $N$ be an $S$-idempotent submodule of an $R$-module $M$. Then there is an $s \in S$ such that
$$
sN\subseteq Hom_R(M,N)N=\sum\{\varphi(N) : \varphi:M\rightarrow N\}.
$$
\end{prop}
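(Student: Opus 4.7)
The plan is to start from the $S$-idempotence of $N$ and rewrite $(N:_RM)^2M$ in a way that exposes the homomorphisms. By assumption, there is some $s \in S$ with
$$
sN \subseteq (N:_RM)^2 M \subseteq N.
$$
I would first factor the colon product as $(N:_RM)^2 M = (N:_RM)\bigl((N:_RM)M\bigr)$, and then observe that $(N:_RM)M \subseteq N$ by definition of the residual, which yields the key containment $(N:_RM)^2 M \subseteq (N:_RM)\,N$.

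Next, I would exhibit explicit homomorphisms realizing the elements of $(N:_RM)N$. For each $a \in (N:_RM)$, multiplication by $a$ gives an $R$-linear map $\mu_a : M \to M$, $x \mapsto ax$, whose image lies in $N$ since $aM \subseteq N$. Thus $\mu_a \in \operatorname{Hom}_R(M,N)$, and applying $\mu_a$ to the submodule $N \subseteq M$ we get $\mu_a(N) = aN$. Summing over $a$ we obtain
$$
(N:_RM)\,N \;=\; \sum_{a\in (N:_RM)} aN \;=\; \sum_{a\in (N:_RM)} \mu_a(N) \;\subseteq\; \sum\{\varphi(N) : \varphi : M \to N\} \;=\; \operatorname{Hom}_R(M,N)\,N.
$$

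Chaining the two containments produces $sN \subseteq \operatorname{Hom}_R(M,N)\,N$ for the same $s$ provided by $S$-idempotence, which is exactly the conclusion. There is no real obstacle here: the whole argument hinges on recognizing that elements of $(N:_RM)$ are precisely scalars that induce $R$-maps $M \to N$, so the proof is essentially one rewriting followed by this identification.
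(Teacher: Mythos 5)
Your proposal is correct and follows essentially the same route as the paper: both pass from $sN\subseteq (N:_RM)^2M$ to $sN\subseteq (N:_RM)N$ and then realize each $a\in(N:_RM)$ as a multiplication homomorphism $M\to N$. If anything, your submodule-level formulation is slightly cleaner than the paper's elementwise version, which writes an element of $(N:_RM)N$ as a single product $ry$ rather than a finite sum.
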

\begin{proof}
As $N$ is an $S$-idempotent submodule of $M$, there is an $s \in S$ such that $sN\subseteq  (N:_RM)^2M$ and so $sN\subseteq  (N:_RM)N$. Let $x \in N$. Then there exists $r \in (N:_RM)$ and $y \in N$ such that $sx=ry$. Now consider the homomorphism $f:M\rightarrow N$ defined by $f(m)=rm$. Then we have
$$
sx=f(y) \in \sum\{\varphi(N) : \varphi:M\rightarrow N\}=Hom_R(M,N)N.
$$
Therefore, $sN\subseteq Hom_R(M,N)N$.
 \end{proof}

The following example shows that the converse of Proposition \ref{p2.9}
is not true in general.
\begin{ex}\label{e2.10}
Let $p$ be a prime number. Take the multiplicatively closed subset $S=\Bbb Z \setminus p\Bbb Z$ of $\Bbb Z$. Then one can see that the submodule $N=\Bbb  Z_p\oplus 0$ of the $\Bbb Z$-module $M=\Bbb
  Z_p\oplus \Bbb Z_p$ is not $S$-idempotent but $sN\subseteq Hom_{\Bbb Z}(M,N)N=N$ for each $s \in S$.
\end{ex}

A multiplicatively closed subset $S$ of $R$ is said to satisfy the \textit{maximal multiple condition} if there
exists an $s \in S$ such that $t\mid s$ for each $t \in S$.

In the following theorem, we characterize the fully $S$-idempotent $R$-modules, where $S$ is a multiplicatively closed subset of $R$ satisfying the maximal multiple condition.
\begin{thm}\label{t2.11}
Let $S$ be a multiplicatively closed subset of $R$ satisfying the maximal multiple condition
(e.g., $S$ is finite or $ S \subseteq U(R)$) and let $M$ be an $R$-module. Then the following statements are equivalent:
\begin{itemize}
  \item [(a)] $M$ is a fully $S$-idempotent module;
  \item [(b)] Every cyclic submodule of $M$ is $S$-idempotent;
  \item [(c)] Every element of $M$ is $S$-idempotent;
  \item [(d)] For all submodules $N$ and $K$ of $M$, we have
               $s(N \cap K)\subseteq (N:_RM)(K:_RM)M$ for some $s \in S$.
\end{itemize}
 \end{thm}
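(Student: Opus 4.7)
The plan is to prove the cycle $(a) \Rightarrow (b) \Rightarrow (c) \Rightarrow (d) \Rightarrow (a)$. The implication $(a) \Rightarrow (b)$ is immediate since every cyclic submodule is a submodule. For $(d) \Rightarrow (a)$, I would simply specialize to $K = N$: this yields $s N = s(N \cap N) \subseteq (N:_R M)^2 M$, and the inclusion $(N:_R M)^2 M \subseteq (N:_R M) M \subseteq N$ is automatic, so $N$ is $S$-idempotent.

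For $(b) \Rightarrow (c)$, let $x \in M$ and apply (b) to the cyclic submodule $Rx$ to get some $s \in S$ with $sx \in (Rx:_R M)^2 M$. I would then write $sx = \sum_{i=1}^n r_i r'_i m_i$ with $r_i, r'_i \in (Rx:_R M)$ and $m_i \in M$. The key observation is that each product $r'_i m_i$ lies in $(Rx:_R M) M \subseteq Rx$, hence $r'_i m_i = b_i x$ for some $b_i \in R$. Substituting gives $sx = \bigl(\sum_i r_i b_i\bigr) x$, and since $(Rx:_R M)$ is an ideal, $a := \sum_i r_i b_i \in (Rx:_R M)$, so $x$ is $S$-idempotent by Definition \ref{d12.1}.

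The main work is in $(c) \Rightarrow (d)$, and this is where the maximal multiple condition is essential. Fix submodules $N, K$ and pick $x \in N \cap K$. By (c) there exist $s_x \in S$ and $a_x \in (Rx :_R M)$ with $s_x x = a_x x$. Since $Rx \subseteq N \cap K$, we have $(Rx:_R M) \subseteq (N:_R M) \cap (K:_R M)$, and therefore
\[
s_x x = a_x x \in (Rx:_R M)(Rx:_R M) M \subseteq (N:_R M)(K:_R M) M.
\]
A priori the scalar $s_x$ depends on $x$; here the maximal multiple condition saves us. Let $s_0 \in S$ satisfy $t \mid s_0$ for every $t \in S$, and write $s_0 = s_x t_x$ for each $x$. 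Then $s_0 x = t_x (s_x x) \in (N:_R M)(K:_R M) M$ for every $x \in N \cap K$, which yields $s_0 (N \cap K) \subseteq (N:_R M)(K:_R M) M$ as required.

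The only genuine obstacle I anticipate is in $(b) \Rightarrow (c)$, where one has to pass from a sum in $(Rx:_R M)^2 M$ to a single coefficient $a \in (Rx:_R M)$; the trick of absorbing the elements $r'_i m_i \in Rx$ as multiples $b_i x$ is what makes this work. The step $(c) \Rightarrow (d)$ is conceptually straightforward once one realizes that the maximal multiple $s_0$ provides a uniform $s$ across the possibly uncountably many elements of $N \cap K$.
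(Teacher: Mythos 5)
Your overall architecture is sound and close to the paper's own (the paper proves $(c)\Rightarrow(a)$ directly and then $(a)\Leftrightarrow(d)$, whereas you run the four-step cycle; your detailed argument for $(b)\Rightarrow(c)$, which the paper dismisses as clear, is correct --- absorbing each $r_i'm_i\in (Rx:_RM)M\subseteq Rx$ as $b_ix$ is exactly the right move). However, there is a genuine gap at the crux of $(c)\Rightarrow(d)$: the displayed containment $s_x x = a_x x \in (Rx:_RM)(Rx:_RM)M$ is unjustified. From $a_x \in (Rx:_RM)$ and $x \in M$ you only get $a_x x \in (Rx:_RM)M$ --- one factor of the ideal, not two --- and an element of $IM$ need not lie in $I^2M$.

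The missing idea is the squaring trick, which is precisely the computational step in the paper's proof of $(c)\Rightarrow(a)$: from $s_x x = a_x x$ one gets $s_x^2 x = s_x(a_x x) = a_x(s_x x) = a_x^2 x \in (Rx:_RM)^2 M$. Replacing $s_x$ by $s_x^2$ (still an element of $S$) repairs everything: $(Rx:_RM)^2M \subseteq (N:_RM)(K:_RM)M$ since $Rx\subseteq N\cap K$, and the maximal multiple $s_0$ is divisible by each $s_x^2$, so $s_0(N\cap K)\subseteq (N:_RM)(K:_RM)M$. With that one-line fix your proof is complete and essentially coincides with the paper's.
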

\begin{proof}
$(a) \Rightarrow (b)$  and $(b) \Rightarrow (c)$ are clear.

$(c) \Rightarrow (a)$. Let $N$ be a submodule of $M$ and $x \in N$. Then by
hypothesis, there exist $s_x \in S$ and $a \in (Rx:_RM)$ such that $s_xx=ax$. Hence $as_xx=a^2x$ and so
$s_x^2x=s_xax=as_xx=a^2x$. Thus $s_x^2Rx\subseteq (Rx:_RM)^2M$.
Now as  $S$ satisfying the maximal multiple condition, there
exists an $s \in S$ such that  $sRx\subseteq (Rx:_RM)^2M\subseteq (N:_RM)^2M$.
Therefore,  $sN \subseteq (N:_RM)^2M$, as required.

$(a) \Rightarrow (d)$. Let $N$ and $K$ be two submodules of $M$. Then for some $s \in S$ we have
$$
s(N \cap K)\subseteq (N \cap K:_RM)^2M \subseteq (N:_RM)(K:_RM)M.
$$

$(d) \Rightarrow (a)$. For a submodule $N$ of $M$,
we have
$$
sN=s(N \cap N)\subseteq (N:_RM)(N:_RM)M=(N:_RM)^2M
$$
for some $s \in S$.
\end{proof}

Let $R_i$ be a commutative ring with identity, $M_i$ be an $R_i$-module for each $i = 1, 2,..., n$, and $n \in \Bbb N$. Assume that
$M = M_1\times M_2\times ...\times M_n$ and $R = R_1\times R_2\times ...\times R_n$. Then $M$ is clearly
an $R$-module with componentwise addition and scalar multiplication. Also,
if $S_i$ is a multiplicatively closed subset of $R_i$ for each $i = 1, 2,...,n$,  then
$S = S_1\times S_2\times ...\times S_n$ is a multiplicatively closed subset of $R$. Furthermore,
each submodule $N$ of $M$ is of the form $N = N_1\times N_2\times...\times N_n$, where $N_i$ is a
submodule of $M_i$.
\begin{thm}\label{l2.12}
Let $M_i$ be an $R_i$-module and $S_i \subseteq R_i$ be a multiplicatively closed subset for $i = 1, 2$. Assume that
$M = M_1\times  M_2$, $R = R_1\times  R_2$, and $S = S_1\times  S_2$. Then $M$ is a fully $S$-idempotent $R$-module if and only
if $M_1$ is a fully $S_1$-idempotent $R_1$-module and $M_2$ is a fully $S_2$-idempotent $R_2$-module.
\end{thm}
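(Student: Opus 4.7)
The plan is to exploit the explicit description of submodules, colon ideals, and multiplicatively closed subsets in a product. Every submodule of $M = M_1 \times M_2$ has the form $N = N_1 \times N_2$ with $N_i$ a submodule of $M_i$; the colon ideal factors as $(N:_R M) = (N_1:_{R_1} M_1) \times (N_2:_{R_2} M_2)$; and the action satisfies $(I_1 \times I_2)(M_1 \times M_2) = I_1 M_1 \times I_2 M_2$. Given these compatibilities, $S$-idempotence on the factors will assemble componentwise into $S$-idempotence on $M$, and conversely.

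For the forward direction, assume $M$ is fully $S$-idempotent, and take an arbitrary submodule $N_1$ of $M_1$. I would test the hypothesis on the submodule $N_1 \times M_2$ of $M$: there exists $(s_1, s_2) \in S_1 \times S_2$ with $(s_1,s_2)(N_1 \times M_2) \subseteq (N_1 \times M_2 :_R M)^2 M$. Computing, $(N_1 \times M_2 :_R M) = (N_1:_{R_1} M_1) \times R_2$, and so the right-hand side equals $(N_1:_{R_1} M_1)^2 M_1 \times M_2$. Comparing first components yields $s_1 N_1 \subseteq (N_1:_{R_1} M_1)^2 M_1$, so $N_1$ is $S_1$-idempotent in $M_1$. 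Symmetrically, testing on $M_1 \times N_2$ shows $M_2$ is fully $S_2$-idempotent.

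For the reverse direction, take a submodule $N = N_1 \times N_2$ of $M$. By hypothesis there exist $s_i \in S_i$ with $s_i N_i \subseteq (N_i:_{R_i} M_i)^2 M_i$ for $i=1,2$. Setting $s = (s_1,s_2) \in S$, one gets
\[
sN = s_1 N_1 \times s_2 N_2 \subseteq (N_1:_{R_1} M_1)^2 M_1 \times (N_2:_{R_2} M_2)^2 M_2 = (N:_R M)^2 M,
\]
and the inclusion $(N:_R M)^2 M \subseteq N$ is automatic. Hence $N$ is $S$-idempotent, and $M$ is fully $S$-idempotent.

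There is no real obstacle here; the argument is essentially bookkeeping. The only points needing care are the identity $(N_1 \times N_2 :_R M) = (N_1:_{R_1} M_1) \times (N_2:_{R_2} M_2)$ and the compatibility of products of ideals with the componentwise action on $M$, both of which are standard for products of commutative rings and modules. Choosing the right test submodules ($N_1 \times M_2$ and $M_1 \times N_2$) in the forward direction is the one small trick that makes the componentwise extraction clean.
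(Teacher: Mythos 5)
Your proof is correct and takes essentially the same componentwise route as the paper: decompose submodules and colon ideals over the product ring, extract the first coordinate for the forward direction, and assemble $s=(s_1,s_2)$ for the converse. The only cosmetic difference is your choice of test submodule $N_1\times M_2$ where the paper uses $N_1\times\{0\}$; both give $s_1N_1\subseteq (N_1:_{R_1}M_1)^2M_1$ on comparing first coordinates.
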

\begin{proof}
 For only if part, without loss of generality we will show $M_1$ is a fully $S_1$-idempotent $R_1$-module.
Take a submodule $N_1$ of $M_1$. Then $N_1\times  \{0\}$ is a submodule of $M$. Since $M$ is a fully $S$-idempotent
$R$-module, there exists $s = (s_1, s_2)  \in S_1\times  S_2$ such that $(s_1, s_2)(N_1\times \{0\}) \subseteq (N_1\times  \{0\} :_R M)^2M$. By focusing on first coordinate, we have $s_1N_1 \subseteq (N_1 :_{R_1} M_1)^2M_1$. So $M_1$ is a fully $S_1$-idempotent $R_1$-module. Now assume that $M_1$ is a fully $S_1$-idempotent module and $M_2$ is a fully $S_2$-idempotent module. Take
a submodule $N$ of $M$. Then $N$ must be in the form of $N_1\times  N_2$, where $N_1 \subseteq M_1,N_2 \subseteq M_2$. Since $M_1$ is a fully $S_1$-idempotent $R_1$-module, there exists an $s_1\in S_1$
such that $s_1N_1 \subseteq (N_1 :_{R_1} M_1)^2M_1$. Similarly, there exists an element $s_2 \in S_2$ such that $s_2N_2 \subseteq (N_2 :_{R_2}M_2)^2M_2$. Now, put $s = (s_1, s_2) \in S$. Then we get
$$
(s_1, s_2)N \subseteq s_1N_1\times  s_2N_2 \subseteq (N_1 :_{R_1} M_1)^2M_1\times  (N_2 :_{R_2}
M_2)^2M_2 \subseteq (N :_R M)^2M.
$$
Hence, $M$ is a fully $S$-idempotent $R$-module.
\end{proof}

\begin{thm}\label{t2.13}
Let $M_i$ be an $R_i$-module for $i=1,2,..,n$ and let $S_1, ..., S_n$ be multiplicatively closed subsets of
$R_1, ..., R_n$, respectively. Assume that $M = M_1\times ...\times M_n$, $R = R_1\times ...\times R_n$ and
$S = S_1\times ...\times S_n$. Then the following statements are equivalent.
\begin{itemize}
\item [(a)] $M$ is a fully $S$-idempotent module.
\item [(b)] $M_i$ is a fully $S_i$-idempotent module for each $i\in \{1, 2, ..., n\}$.
\end{itemize}
\end{thm}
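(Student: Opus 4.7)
The plan is to prove this by induction on $n$, using Theorem \ref{l2.12} as the base case. The statement for $n=1$ is a tautology, and the case $n=2$ is exactly Theorem \ref{l2.12}, so the genuine content of the theorem is just the inductive bootstrap from $n-1$ to $n$.

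For the inductive step, I would regroup the product by separating off the last coordinate: set $M' = M_1 \times \cdots \times M_{n-1}$, $R' = R_1 \times \cdots \times R_{n-1}$, and $S' = S_1 \times \cdots \times S_{n-1}$, so that $M \cong M' \times M_n$ as a module over $R \cong R' \times R_n$, with $S \cong S' \times S_n$. This identification is straightforward since addition and scalar multiplication are componentwise, and submodules of $M$ decompose as a product of a submodule of $M'$ (itself a product of submodules $N_1, \ldots, N_{n-1}$) with a submodule of $M_n$. Applying Theorem \ref{l2.12} to this two-factor decomposition gives that $M$ is fully $S$-idempotent over $R$ if and only if $M'$ is fully $S'$-idempotent over $R'$ and $M_n$ is fully $S_n$-idempotent over $R_n$. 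The inductive hypothesis applied to $M'$ then converts ``$M'$ is fully $S'$-idempotent'' into ``$M_i$ is fully $S_i$-idempotent for each $i \in \{1,\ldots,n-1\}$'', completing the step.

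There is essentially no obstacle: the only thing to verify carefully is that the canonical identification $M_1 \times \cdots \times M_n \cong (M_1 \times \cdots \times M_{n-1}) \times M_n$ is compatible with the ring, module, and multiplicatively closed subset structures on both sides, so that Theorem \ref{l2.12} is legitimately applicable. Since all the structures are defined coordinatewise, this compatibility is immediate, and the colon ideals $(N:_RM)$ also decompose coordinatewise, so that the $S$-idempotency conditions match up on the nose. Thus the induction goes through cleanly and the theorem follows.
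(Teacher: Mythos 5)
Your proposal is correct and follows exactly the same route as the paper: induction on $n$ with the $n=2$ case (Theorem \ref{l2.12}) as the key input, regrouping $M$ as $(M_1\times\cdots\times M_{n-1})\times M_n$ and invoking the inductive hypothesis on the first factor. The extra care you take to note that the coordinatewise structures and colon ideals are compatible with this regrouping is implicit in the paper's argument but not a different method.
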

\begin{proof}
We use mathematical induction. If $n =1$, the claim is trivial. If $n =2$, the
claim follows from Theorem \ref{l2.12}. Assume that the claim is true for $n<k$ and we will
show that it is also true for $n=k$. Put $M = (M_1\times ... \times M_{n-1})\times  M_n$, $R = (R_1\times  R_2\times ...\times  R_{n-1})\times  R_n$, and $S = (S_1\times ...\times S_{n-1})\times  S_n$. By Theorem \ref{l2.12}, $M$ is fully $S$-idempotent module if and only if $M_1\times ... \times M_{n-1}$ is a fully $(S_1\times ...\times S_{n-1})$-idempotent  $(R_1\times  R_2\times ...\times  R_{n-1})$-module and $M_n$ is a fully $S_n$-idempotent $R_n$-module. Now the rest follows from the induction hypothesis.
\end{proof}

Let $M$ be an $R$-module. The \textit{idealization} or \textit{trivial extension} $R \propto M = R\oplus M$ of $M$ is a commutative
ring with componentwise addition and multiplication $(a,m)(b,\acute{m}) = (ab, a\acute{m}+ bm)$ for each $a, b \in R$, $m,\acute{m} \in M$ \cite{AW09}. If $I$ is an ideal of $R$ and $N$ is a submodule of $M$, then $I \propto N$ is
an ideal of $R \propto M$ if and only if $IM \subseteq N$. In that case, $I \propto N$ is called a \textit{homogeneous ideal} of
$R \propto M$. Also if $S \subseteq R$ is a multiplicatively closed subset, then $S \propto N$ is a multiplicatively closed
subset of $R \propto M$ \cite[Theorem 3.8]{AW09}.

Let $I$ be an ideal of $R$ and $S \subseteq  R$ be a multiplicatively
closed subset of $R$. If $I$ is a fully $S$-idempotent $R$-module, then we say that $I$ is a \textit{fully $S$-idempotent ideal} of $R$.
\begin{thm}\label{t2.14}
Let $N$ be a submodule of an $R$-module $M$ and $S$ be a multiplicatively closed subset of $R$. Then the
following statements are equivalent:
\begin{itemize}
\item [(a)]  $N$ is a fully $S$-idempotent $R$-module;
\item [(b)] $0 \propto N$ is a fully $S \propto 0$-idempotent ideal of $R \propto M$;
\item [(c)] $0 \propto N$ is a fully $S \propto M$-idempotent ideal of $R \propto M$.
\end{itemize}
\end{thm}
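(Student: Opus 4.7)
The plan is to translate each statement about the ideal $0 \propto N \subseteq R \propto M$ directly into a statement about $N$ as an $R$-module by explicit computation inside the idealization. The key observation, which I would record first, is that the $(R \propto M)$-submodules of $0 \propto N$ are in bijection with the $R$-submodules of $N$ via $K \longleftrightarrow 0 \propto K$: since $(r, m)(0, n) = (0, rn)$, a subset of $0 \propto N$ that is closed under addition and under the $R \propto M$-action must have the form $0 \propto K$ where $K \subseteq N$ is closed under addition and under the $R$-action.

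Next, I would compute the relevant colon ideal. For any $R$-submodule $K$ of $N$, the identity $(r, m)(0, n) = (0, rn)$ gives
\[
\bigl((0 \propto K) :_{R \propto M} (0 \propto N)\bigr) = (K :_R N) \propto M,
\]
because membership imposes $rn \in K$ for every $n \in N$ but no condition on $m$. Writing $I = (K :_R N)$, the multiplication rule yields $(I \propto M)^2 = I^2 \propto IM$, and then
\[
(I^2 \propto IM)(0 \propto N) = 0 \propto I^2 N = 0 \propto (K :_R N)^2 N,
\]
since the first coordinate of $(0, n)$ kills the $IM$-part. This is the single computation that powers all three directions.

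The final step is to notice that for any $(s, m_0) \in R \propto M$ (including the case $m_0 = 0$) one has $(s, m_0)(0, k) = (0, sk)$. Thus the $S \propto 0$-idempotent condition on $0 \propto K$ and the $S \propto M$-idempotent condition on $0 \propto K$ both collapse to the existence of $s \in S$ with $sK \subseteq (K :_R N)^2 N$, which is precisely the condition that $K$ is an $S$-idempotent submodule of $N$. Running this over all submodules $K \subseteq N$, each of (a), (b), (c) asserts exactly the same family of conclusions, so the three are equivalent. For (b)$\Rightarrow$(c) I would simply remark that $S \propto 0 \subseteq S \propto M$, so the witness from (b) is also a witness for (c).

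The main obstacle is nothing more than careful bookkeeping: correctly computing $(I \propto M)^2$ from the idealization product $(a,m)(b,m') = (ab,\, am' + bm)$, and keeping straight that the $M$-coordinate of any multiplier becomes irrelevant the moment it meets $(0, n)$. Once these two small verifications are in hand, each implication reduces to a one-line translation, and no deeper structural result about $S$-idempotence is needed.
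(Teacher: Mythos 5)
Your proposal is correct and follows essentially the same route as the paper: identify the ideals of $R \propto M$ inside $0 \propto N$ with submodules $K$ of $N$, compute the colon ideal as $(K:_RN)\propto M$, and observe that squaring and multiplying by $0 \propto N$ yields $0 \propto (K:_RN)^2N$ while the $M$-coordinate of any multiplier is irrelevant. Your direct verification that the colon ideal already contains $0 \propto M$ is in fact a small streamlining of the paper's $(c)\Rightarrow(a)$ step, which replaces $\acute{J}$ by $\acute{J}+0\propto M$ before reaching the same conclusion.
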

\begin{proof}
$(a) \Rightarrow (b)$.
Suppose that $N$ is a fully $S$-idempotent $R$-module. Take an ideal $J$ of $R \propto M$ contained
in $0 \propto N$. Then $J = 0 \propto \acute{N}$ for some submodule $\acute{N}$ of $M$ with $\acute{N} \subseteq N$. Since $N$ is a fully $S$-idempotent module, there exists $s \in S$  with $s\acute{N} \subseteq (\acute{N}:_RN)^2N\subseteq   \acute{N}$. It follows that
$$
(s, 0)J = (s, 0)(0 \propto \acute{N}) = 0 \propto s\acute{N}
\subseteq 0 \propto  (\acute{N}:_RN)^2N =
$$
$$
((\acute{N}:_RN)^2\propto M)(0 \propto N)
\subseteq  0 \propto \acute{N} = J.
$$
This implies that $0 \propto N$ is a fully $S \propto 0$-idempotent ideal of $R \propto M$.

$(b) \Rightarrow (c)$.
This follows from the fact that $S \propto 0 \subseteq  S \propto M$ and Proposition \ref{p2.8} (a).

$(c) \Rightarrow (a)$.
Suppose that $0 \propto N$ is a fully $S \propto M $-idempotent ideal of  $R \propto M $. Let  $\acute{N} $ be a submodule
of  $N $. Then  $0 \propto \acute{N} \subseteq   0 \propto N$ and $0 \propto \acute{N}$ is an ideal of  $R \propto M $. Since  $0 \propto N $ is a fully  $S \propto M $-idempotent ideal of  $R \propto M $, there exists  $(s,m) \in S \propto M $  such that $(s,m)(0 \propto \acute{N}) \subseteq  \acute{J}^2 (0 \propto N)\subseteq   (0 \propto \acute{N} )$, where $\acute{J}=(0 \propto\acute{N}:_{R \propto M}0 \propto N)$. Clearly, $(\acute{N}:_RN) \propto N=(0 \propto\acute{N}:_{R \propto M}0 \propto N)$. Now, set $J = \acute{J} + 0 \propto M$. As
$$
\acute{J} (0 \propto N) = \acute{J} (0 \propto N) + (0 \propto M)(0 \propto N) = (\acute{J} + 0 \propto M)(0 \propto N),
$$
we may assume that $\acute{J}$ contains $0 \propto M$. Then
$\acute{J} = (\acute{N}:_RN) \propto M$. This implies that
$$
(s,m)(0 \propto \acute{N}) = 0 \propto s\acute{N} \subseteq  ((\acute{N}:_RN) \propto M)^2(0 \propto N) =0 \propto (\acute{N}:_RN)^2N \subseteq  0 \propto \acute{N}
$$
and so $s\acute{N} \subseteq  (\acute{N}:_RN)^2N \subseteq  \acute{N}$. Hence, $N$ is a fully $S$-idempotent $R$-module
\end{proof}

\begin{prop}\label{p2.15}
Let $M$ and $\acute{M}$ be $R$-modules. Assume that $S$ is a multiplicatively closed subset of $R$ and
$f : M \rightarrow \acute{M}$ is an $R$-epimorphism. If $M$ is a fully $S$-idempotent module, then $\acute{M}$ is a fully $S$-idempotent
module. Conversely, suppose that $\acute{M}$ is a fully $S$-idempotent module and $tker(f) = 0$ for some
$t \in S$. Then $M$ is a fully $S$-idempotent module.
\end{prop}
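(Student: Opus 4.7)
The plan for the forward direction is to transport $S$-idempotency along $f$ by pulling submodules back from $\acute{M}$ to $M$. Given $\acute{N} \subseteq \acute{M}$, I would set $N = f^{-1}(\acute{N})$, apply the hypothesis on $M$ to find $s \in S$ with $sN \subseteq (N:_R M)^2 M$, and push the inclusion forward through $f$. The single elementary observation needed is that $(N:_R M) \subseteq (\acute{N}:_R \acute{M})$: any $r \in (N:_R M)$ satisfies $r\acute{M} = rf(M) = f(rM) \subseteq f(N) \subseteq \acute{N}$ because $f$ is surjective. Combined with $f(N) = \acute{N}$, applying $f$ to $sN \subseteq (N:_R M)^2 M$ yields $s\acute{N} \subseteq (\acute{N}:_R \acute{M})^2 \acute{M}$; the outer inclusion $(\acute{N}:_R \acute{M})^2 \acute{M} \subseteq \acute{N}$ holds automatically, so $\acute{N}$ is $S$-idempotent.

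For the converse I would reverse the procedure. Given $N \subseteq M$, push forward to $f(N)$ and invoke the hypothesis on $\acute{M}$ to get $s \in S$ with $sf(N) \subseteq (f(N):_R \acute{M})^2 \acute{M}$. For each $x \in N$, using surjectivity I can write $sf(x) = \sum_j a_j b_j f(m_j)$ with $a_j, b_j \in (f(N):_R \acute{M})$ and $m_j \in M$, so that the error term $sx - \sum_j a_j b_j m_j$ lies in $\ker f$. The hypothesis $t\ker f = 0$ will be used both to clear this error and to correct the colon ideals.

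The main obstacle is that $(f(N):_R \acute{M}) \subseteq (N:_R M)$ can fail; all one can read off from $a \in (f(N):_R \acute{M})$ is $aM \subseteq N + \ker f$. The kernel hypothesis rescues us: multiplying by $t$ gives $taM \subseteq N + t\ker f = N$, hence $ta \in (N:_R M)$. Applying this to both $a_j$ and $b_j$, and using $t^2\bigl(sx - \sum_j a_j b_j m_j\bigr) = 0$, one obtains
$$
t^2 s x \;=\; t^2\sum_j a_j b_j m_j \;=\; \sum_j (t a_j)(t b_j)\, m_j \;\in\; (N:_R M)^2 M.
$$
Since $t^2 s \in S$ and $(N:_R M)^2 M \subseteq N$ is automatic, $N$ is $S$-idempotent. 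The bookkeeping subtlety is making the two independent uses of $t$ explicit — one to absorb the kernel contribution in the lifted equation, the other to transfer colon ideals from $\acute{M}$ back to $M$ — and then consolidating all factors into the single element $t^2 s$ of $S$.
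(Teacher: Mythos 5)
Your proof is correct and follows essentially the same route as the paper: pull $\acute{N}$ back to $N=f^{-1}(\acute{N})$ and use $(N:_RM)\subseteq(\acute{N}:_R\acute{M})$ for the forward direction, then use $t\ker(f)=0$ to transport the idempotency relation back for the converse. If anything, your elementwise bookkeeping in the converse (the observation that $t(f(N):_R\acute{M})\subseteq(N:_RM)$ and the explicit witness $t^2s$) is more careful than the paper's, which passes rather quickly from $sf(N)\subseteq(f(N):_Rf(M))^2f(M)$ to $sN+\ker(f)\subseteq(N:_RM)^2M+\ker(f)$.
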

\begin{proof}
Let $\acute{N}$ be a submodule of $\acute{M}$. Then $N :=f^{-1}(\acute{N})$ is a submodule of $M$. As $M$ is a fully  $S$-idempotent module, there exists $s \in S$ such that $sN \subseteq  (N : _RM)^2M \subseteq  N$. Hence, $f(sN) \subseteq  f((N :_R
M)^2M) \subseteq  f(N)$. This yields that
$$
s\acute{N} = sf(N) \subseteq  (N : _RM)^2f(M) = (N : _RM)\acute{M} \subseteq  \acute{N}.
$$
Thus $ s\acute{N} \subseteq
(N : _RM)^2\acute{M} \subseteq  \acute{N}$.  Hence, $\acute{M}$ is a fully $S$-idempotent module.
For the converse, let $N$ be a submodule of $M$ and $tker(f) = 0$. Since $f(M)=\acute{M}$ is
a fully $S$-idempotent module, there exists $s\in S$ with $sf(N)\subseteq (f(N):_Rf(M))^2f(M)$. Hence
$$
sN+ker(f)\subseteq (N:_RM)^2M+ker(f).
$$
Multiplying through by $t$ and noting that $tker(f)= 0$
gives that
$$
(st)N\subseteq t(N:_RM)^2M \subseteq (N:_RM)^2M \subseteq N.
$$
So, $M$ is a fully $S$-idempotent $R$-module.
\end{proof}

\begin{cor}\label{c2.15}
Let $S$ be a multiplicatively closed subset of $R$, $M$ a fully $S$-idempotent $R$-module, and $N$ a submodule of $M$. Then $M/N$ is a fully
$S$-idempotent $R$-module. Conversely, if $M/N$ is a fully $S$-idempotent $R$-module and there exists $t \in S$
with $tN = 0$, then $M$ is a fully $S$-idempotent $R$-module.
\end{cor}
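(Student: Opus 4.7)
The plan is to derive this corollary as an immediate application of Proposition \ref{p2.15} by taking $f$ to be the canonical projection $\pi\colon M \to M/N$, which is an $R$-epimorphism with $\ker(\pi) = N$.

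First I would handle the forward direction. Since $\pi\colon M \to M/N$ is surjective and $M$ is assumed to be a fully $S$-idempotent $R$-module, the first assertion of Proposition \ref{p2.15} gives directly that $M/N$ is a fully $S$-idempotent $R$-module. No further argument is needed for this part.

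For the converse, I would invoke the second half of Proposition \ref{p2.15}. Set $\acute{M} = M/N$ and $f = \pi$. The hypothesis provides a $t \in S$ with $tN = 0$, which is precisely the condition $t\ker(f) = 0$ required in Proposition \ref{p2.15}. Hence, under the assumption that $M/N$ is fully $S$-idempotent, that proposition yields that $M$ is fully $S$-idempotent.

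There is really no obstacle here beyond correctly identifying the kernel of the quotient map; both implications are just specializations of the epimorphism statement already proved. The whole argument should fit in a few lines, simply citing Proposition \ref{p2.15} twice.
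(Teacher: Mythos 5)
Your proposal is correct and matches the paper's intent exactly: the corollary is stated without proof precisely because it is the specialization of Proposition \ref{p2.15} to the canonical projection $\pi\colon M\to M/N$ with $\ker(\pi)=N$. Both directions follow as you describe.
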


\begin{prop}\label{p2.16}
Let $S$ and $T$ be multiplicatively closed subsets of $R$.
Put $\tilde{S} = \{s/1\in T^{-1}R: s \in S\}$, a multiplicatively closed subset of $T^{-1}R$. Suppose that $M$ is a fully $S$-idempotent $R$-module. Then $T^{-1}M$ is a fully $\tilde{S}$-idempotent $T^{-1}R$-module. Hence if $S \subseteq  T^*$, then $T^{-1}M$ is a fully idempotent $T^{-1}R$-module. Thus $S^{-1}M$ is a fully idempotent $S^{-1}R$-module.
\end{prop}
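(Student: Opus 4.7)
The plan is to work with an arbitrary submodule of $T^{-1}M$, show it is the extension of a submodule of $M$, and then transfer the $S$-idempotent inclusion in $M$ to an $\tilde{S}$-idempotent inclusion in $T^{-1}M$ by applying $T^{-1}$.

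First I would take a submodule $\tilde{N}\subseteq T^{-1}M$ and set $N=\{m\in M: m/1\in \tilde{N}\}$. Standard localization gives $T^{-1}N=\tilde{N}$. By hypothesis there exists $s\in S$ with $sN\subseteq (N:_R M)^2 M\subseteq N$. Applying the exact functor $T^{-1}(-)$ yields
\[
(s/1)\,T^{-1}N \;\subseteq\; T^{-1}\bigl((N:_R M)^2 M\bigr) \;=\; \bigl(T^{-1}(N:_R M)\bigr)^2\,T^{-1}M \;\subseteq\; T^{-1}N.
\]
The key inclusion $T^{-1}(N:_R M)\subseteq (T^{-1}N:_{T^{-1}R} T^{-1}M)$ (which holds in general, without any finite generation assumption, since $a/t\in T^{-1}(N:_R M)$ sends $T^{-1}M$ into $T^{-1}N$) then gives
\[
(s/1)\,T^{-1}N \;\subseteq\; (T^{-1}N:_{T^{-1}R} T^{-1}M)^2\,T^{-1}M \;\subseteq\; T^{-1}N,
\]
with $s/1\in \tilde{S}$. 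This proves $T^{-1}M$ is a fully $\tilde{S}$-idempotent $T^{-1}R$-module.

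For the second assertion, assume $S\subseteq T^{*}$. Then for every $s\in S$ the element $s/1$ is a unit in $T^{-1}R$, so $\tilde{S}\subseteq U(T^{-1}R)$. The converse part of Proposition \ref{p2.3} then upgrades $T^{-1}M$ from fully $\tilde{S}$-idempotent to fully idempotent. The final sentence is the special case $T=S$: since $S\subseteq S^{*}$ trivially, the previous step yields that $S^{-1}M$ is a fully idempotent $S^{-1}R$-module.

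The only step that requires care is the containment $T^{-1}(N:_R M)\subseteq (T^{-1}N:_{T^{-1}R} T^{-1}M)$; this is the routine direction and does not require $M$ to be finitely generated (which would be needed only for the reverse inclusion). Everything else is a direct translation through the localization functor plus an appeal to Proposition \ref{p2.3}, so I do not anticipate any genuine obstacle.
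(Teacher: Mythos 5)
Your proposal is correct and follows essentially the same route as the paper: realize the given submodule of $T^{-1}M$ as $T^{-1}N$ for a submodule $N$ of $M$, push the inclusion $sN\subseteq (N:_RM)^2M\subseteq N$ through localization using the easy containment $T^{-1}(N:_RM)\subseteq (T^{-1}N:_{T^{-1}R}T^{-1}M)$, and invoke Proposition \ref{p2.3} when $\tilde{S}\subseteq U(T^{-1}R)$. If anything, you are slightly more careful than the paper in making that containment (and the fact that it needs no finite generation) explicit.
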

\begin{proof}
Let $N$ be a $T^{-1}R$-submodule of $T^{-1}M$, so $N = T^{-1}\acute{N}$ for some submodule $\acute{N}$ of $M$. Since
$M$ is a fully $S$-idempotent module, there exists $s \in S$ with $s\acute{N} \subseteq  (\acute{N}:_RM)^2M \subseteq  \acute{N}$. Then
$$
(s/1)N = T^{-1}(s\acute{N}) \subseteq  (T^{-1}(\acute{N}:_RM)^2)(T^{-1}M) \subseteq  T^{-1}\acute{N} = N.
$$
So $T^{-1}M$ is a fully $\tilde{S}$-idempotent $T^{-1}R$-module. If
$S \subseteq  T^*$, then $\tilde{S} \subseteq  U(T^{-1}R)$. Hence, $T^{-1}M$ is a fully idempotent $T^{-1}R$-module by Proposition \ref{p2.3}.
\end{proof}

\begin{cor}\label{c2.17}
Let $M$ be an $R$-module and $S$ a multiplicatively closed subset of $R$ satisfying the maximal
multiple condition (e.g., $S$ is finite or $S \subseteq  U(R)$). Then $M$ is a fully $S$-idempotent module if and
only if $S^{-1}M$ is a fully idempotent $S^{-1}R$-module.
\end{cor}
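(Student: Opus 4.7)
The forward direction is immediate from the last sentence of Proposition \ref{p2.16} applied with $T=S$: the set $\tilde S=\{s/1:s\in S\}$ then consists of units of $S^{-1}R$, so that result directly yields $S^{-1}M$ fully idempotent, and this half uses no hypothesis on $S$.

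For the converse, I will assume $S^{-1}M$ is fully idempotent and fix a maximal multiple $\sigma\in S$, so that $t\mid\sigma$ for every $t\in S$. By Theorem \ref{t2.11} it will suffice to show every cyclic submodule $Rx$ of $M$ is $S$-idempotent. The starting point is that $(S^{-1}R)(x/1)=S^{-1}(Rx)$ is idempotent in $S^{-1}M$, so one can express
$$x/1=\sum_{i=1}^{k}(a_i/s_i)(b_i/t_i)(m_i/1)$$
with each $(a_i/s_i),(b_i/t_i)\in(S^{-1}(Rx):_{S^{-1}R}S^{-1}M)$ and $m_i\in M$ (denominators on the module entries can be absorbed into the ideal factors, since the colon is an ideal of $S^{-1}R$).

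The heart of the argument will be to show $\sigma a_i,\sigma b_i\in(Rx:_RM)$. For each $m\in M$, the inclusion $(a_i/s_i)(m/1)\in S^{-1}(Rx)$ supplies $v_{i,m}\in S$ and $r_{i,m}\in R$ with $v_{i,m}a_im=s_ir_{i,m}x$ in $M$. Using the maximal multiple condition, write $\sigma=v_{i,m}q$ with $q\in R$ and multiply: $\sigma a_im=s_i(qr_{i,m})x\in s_iRx\subseteq Rx$. Since $m$ is arbitrary, $\sigma a_iM\subseteq Rx$, i.e.\ $\sigma a_i\in(Rx:_RM)$; the same reasoning handles $\sigma b_i$. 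This is exactly the step where the maximal multiple condition is essential, and I expect it to be the main obstacle: without it, the "denominator" $v_{i,m}$ depends on the chosen $m$ and cannot be absorbed uniformly across all of $M$.

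To conclude, clearing denominators in the displayed expression produces $u\in S$ with $u\prod_j(s_jt_j)\,x=u\sum_i a_ib_im_i\prod_{j\ne i}(s_jt_j)$ in $M$. Multiplying through by $\sigma^2$ rewrites the right-hand side as $u\sum_i\prod_{j\ne i}(s_jt_j)\,(\sigma a_i)(\sigma b_i)m_i$, and each summand lies in $(Rx:_RM)^2M$ by the previous step. Setting $s=u\sigma^2\prod_j(s_jt_j)\in S$ gives $sx\in(Rx:_RM)^2M$, hence $sRx\subseteq(Rx:_RM)^2M\subseteq Rx$, so $Rx$ is $S$-idempotent; Theorem \ref{t2.11} then delivers that $M$ is fully $S$-idempotent.
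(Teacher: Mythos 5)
Your proposal is correct, and for the converse it takes a genuinely different (and in one respect more careful) route than the paper. The paper argues directly with an arbitrary submodule $N$: from full idempotency of $S^{-1}M$ it writes $S^{-1}N=(S^{-1}N:_{S^{-1}R}S^{-1}M)^2 S^{-1}M=S^{-1}\bigl((N:_RM)^2M\bigr)$ and then uses the maximal multiple $s$ only once, to turn the elementwise statement ``for each $m\in N$ there is $t\in S$ with $tm\in (N:_RM)^2M$'' into $sN\subseteq (N:_RM)^2M$. You instead reduce to cyclic submodules via Theorem \ref{t2.11}, expand $x/1$ explicitly in $(S^{-1}(Rx):_{S^{-1}R}S^{-1}M)^2S^{-1}M$, and use the maximal multiple $\sigma$ a second time to show $\sigma a_i\in(Rx:_RM)$ for each numerator $a_i$ of an element of the localized colon ideal. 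That extra step is exactly the justification of the inclusion $(S^{-1}N:_{S^{-1}R}S^{-1}M)\subseteq S^{-1}(N:_RM)$, which the paper asserts implicitly but which is not automatic for non-finitely-generated $M$; under the maximal multiple condition your argument shows it does hold (and it works verbatim for arbitrary $N$, so the detour through cyclic submodules and Theorem \ref{t2.11} is optional). In short: the paper's proof is shorter but leans on an unproved localization identity for colon ideals; yours supplies the missing verification at the cost of a little bookkeeping with denominators.
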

\begin{proof}
$(\Rightarrow)$:
 This follows from Proposition \ref{p2.16}.

$(\Leftarrow)$:
 Assume that $S^{-1}M$ is a fully idempotent $S^{-1}R$-module. Take a submodule $N$ of $M$. Since
$S^{-1}M$ is a fully idempotent $S^{-1}R$-module, we have
$$
S^{-1}N = (S^{-1}N:_{S^{-1}R}S^{-1}M)^2(S^{-1}M) = S^{-1}((N:_RM)^2M).
$$
Choose $s \in S$ with $t\mid s$ for each $t \in S$. Note that for each $m \in N$, we have $m/1 \in S^{-1}N = S^{-1}((N:_RM)^2M)$ and so there exists $t \in S$ such that $tm \in (N:_RM)M$ and hence $sm \in (N:_RM)M$. Therefore, we obtain
$$
s^2 N \subseteq  s(N:_RM)^2M\subseteq  (N:_RM)^2M \subseteq  N.
$$ Hence, $M$ is a fully $S$-idempotent module.
\end{proof}

Let $S$ be a multiplicatively closed subset of $R$ and $M$ be an $R$-module. A submodule $N$ of $M$ is said to be \emph {pure} if $IN=N \cap IM$ for every ideal $I$ of $R$ \cite{AF74}.
$M$ is said to be \emph{fully pure} if every submodule of $M$ is pure \cite{AF122}.
A submodule $N$ of $M$  is said to be \emph {$S$-pure} if there exists an $s \in S$ such that $s(N \cap IM) \subseteq IN$ for every ideal $I$ of $R$ \cite{FF2022}. $M$ is said to be \emph{fully $S$-pure} if every submodule of $M$ is $S$-pure \cite{FF2022}.
\begin{thm} \label{t2.18}
Let $S$ be a multiplicatively closed subset of $R$,  $M$ an $S$-multiplication $R$-module, and $N$ be a submodule of $M$.
Then the following statements are equivalent.
\begin {itemize}
\item [(a)] $N$ is an $S$-pure submodule of $M$.
\item [(b)] $N$ is an $S$-multiplication $R$-module and
$N$ is an $S$-idempotent submodule of $M$.
\item [(c)] $N$ is an $S$-multiplication $R$-module and there exists an $s \in S$ such that
$sK\subseteq (N:_RM)K$, where $K$ is a submodule of $N$.
\item [(d)] $N$ is an $S$-multiplication $R$-module and there exists an $s \in S$ such that
$s(K:_RN)N\subseteq (K:_RM)(N:_RM)M$, where $K$ is a submodule of $M$.
\end {itemize}
\end{thm}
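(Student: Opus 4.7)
The plan is to establish the four conditions as a cycle $(a) \Rightarrow (b) \Rightarrow (c) \Rightarrow (d) \Rightarrow (a)$, with the standing $S$-multiplication of $M$ invoked throughout. All of the ideal arithmetic will rest on two easy containments: for $K \subseteq N$, $(K:_R M) \subseteq (K:_R N)$, and in general $(K:_R N)(N:_R M) \subseteq (K:_R M)$, the latter by a one-line element chase (if $rN \subseteq K$ and $r'M \subseteq N$ then $rr'M \subseteq rN \subseteq K$).

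For $(a) \Rightarrow (b)$, I first show $N$ is $S$-multiplication: given $K \subseteq N$, the $S$-multiplication of $M$ furnishes $s_1 K \subseteq IM \subseteq K$ for some $s_1 \in S$ and ideal $I$; since $IM \subseteq N$, we have $N \cap IM = IM$, and $S$-purity of $N$ with the ideal $I$ yields $s_2 IM \subseteq IN$, whence $s_1 s_2 K \subseteq IN \subseteq K$. For $S$-idempotence, apply the $S$-multiplication of $M$ to $N$ itself to obtain $s_1 N \subseteq (N:_R M) M$, then use $S$-purity with the ideal $(N:_R M)$ to get $s_2 (N:_R M) M \subseteq (N:_R M) N$, and feed $s_1 N \subseteq (N:_R M) M$ back in once more to reach $s_1^2 s_2 N \subseteq (N:_R M)^2 M$.

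For $(b) \Rightarrow (c)$, take $K \subseteq N$; the $S$-multiplication of $N$ gives $s_1 K \subseteq IN \subseteq K$ and the $S$-idempotence gives $s_2 N \subseteq (N:_R M)^2 M$, so $s_1 s_2 K \subseteq I s_2 N \subseteq I(N:_R M)(N:_R M) M \subseteq I(N:_R M) N = (N:_R M)(IN) \subseteq (N:_R M) K$, using $(N:_R M)M \subseteq N$ and $IN \subseteq K$. For $(c) \Rightarrow (d)$, given $K \subseteq M$, apply (c) to the submodule $(K:_R N) N$ of $N$ to obtain $s_0 (K:_R N) N \subseteq (N:_R M)(K:_R N) N$; then replace the trailing $N$ by $(N:_R M) M$ via $s_1 N \subseteq (N:_R M) M$ to get $s_0 s_1 (K:_R N) N \subseteq (N:_R M)(K:_R N)(N:_R M) M$, and collapse the middle product through the diagonal inclusion $(K:_R N)(N:_R M) \subseteq (K:_R M)$ to arrive at $(K:_R M)(N:_R M) M$.

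For $(d) \Rightarrow (a)$, fix an ideal $I$ and set $K = N \cap IM$, a submodule of both $N$ and $M$. The $S$-multiplication of $N$ gives $s_1 K \subseteq JN$ for some ideal $J \subseteq (K:_R N)$, so $s_1 K \subseteq (K:_R N) N$; feeding this into (d) yields $s s_1 K \subseteq (K:_R M)(N:_R M) M$. A final telescoping
\[
(K:_R M)(N:_R M) M \subseteq (N:_R M) K = (N:_R M)(N \cap IM) \subseteq (N:_R M) \cdot IM = I(N:_R M) M \subseteq IN
\]
(using $(K:_R M) M \subseteq K$, $N \cap IM \subseteq IM$, and $(N:_R M) M \subseteq N$) delivers the required $S$-purity. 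The main obstacle is keeping the ideal bookkeeping straight: the tempting but wrong-way inclusion $(K:_R N) \supseteq (K:_R M)$ is of no help, whereas both $(c) \Rightarrow (d)$ and $(d) \Rightarrow (a)$ in fact hinge on the diagonal $(K:_R N)(N:_R M) \subseteq (K:_R M)$, combined in the last step with the complementary facts $N \cap IM \subseteq IM$ and $(N:_R M) M \subseteq N$ that allow $(N:_R M)(N \cap IM)$ to collapse into $IN$.
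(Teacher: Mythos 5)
Your proof is correct, and every colon-ideal manipulation checks out; the engine is the same as the paper's (the ``diagonal'' inclusion $(K:_RN)(N:_RM)\subseteq (K:_RM)$ together with one application of $S$-multiplication to trade a trailing $N$ for $(N:_RM)M$). The difference is organizational but genuine: you close a single cycle $(a)\Rightarrow(b)\Rightarrow(c)\Rightarrow(d)\Rightarrow(a)$, whereas the paper proves $(a)\Rightarrow(b)\Rightarrow(c)\Rightarrow(a)$ and then attaches $(d)$ separately via $(b)\Rightarrow(d)$ and the one-line $(d)\Rightarrow(b)$ (take $K=N$). As a result you supply two implications the paper never writes down --- $(c)\Rightarrow(d)$, obtained by applying (c) to the submodule $(K:_RN)N$ of $N$, and $(d)\Rightarrow(a)$, obtained by specializing $K=N\cap IM$ and telescoping down to $IN$ --- while the paper instead gets $(a)$ directly from $(c)$ by applying it to $K=IM\cap N$. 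Your $(a)\Rightarrow(b)$ also differs cosmetically: you run the $S$-multiplication of $M$ to get an honest ideal $I$ with $IM\subseteq K$ and use $N\cap IM=IM$, where the paper feeds the ideals $(K:_RN)$ and $u(N:_RM)$ into the purity condition; the computations are equivalent. One caveat, which you inherit from the paper rather than introduce: in $(d)\Rightarrow(a)$ (as in the paper's $(c)\Rightarrow(a)$) the element $ss_1$ you produce depends on the ideal $I$ through $K=N\cap IM$, whereas the stated definition of $S$-pure requires a single $s$ uniform over all ideals $I$; making the theorem literally true as stated would require reading (c) and (d) with a uniform $s$ and then being more careful in $(b)\Rightarrow(c)$, a quantifier issue the paper glosses over in exactly the same way.
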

\begin{proof}
$(a) \Rightarrow (b)$.
 Let $K$ be a submodule of $N$. As $M$ is an $S$-multiplication module, there exists an $s \in S$ such that $sK \subseteq (K:_RM)M$.
Now since $N$ is $S$-pure, there is an $t \in S$ such that $(K:_RN)N\supseteq t(N \cap (K:_RN)M)$. Hence,
$$
(K:_RN)N\supseteq t(N \cap (K:_RN)M)\supseteq t(N \cap (K:_RM)M)
$$
$$
\supseteq t(N \cap sK)=tsK.
$$
This implies that $N$ is an $S$-multiplication $R$-module. Since $M$ is an $S$-multiplication module, there exists an $u \in S$ such that $uN \subseteq (N:_RM)M$. Now as $N$ is $S$-pure, there is an $v \in S$ such that $(N:_RM)uN \supseteq v(N \cap u(N:_RM)M)$. Therefore,
$$
(N:_RM)^2M=(N:_RM)(N:_RM)M\supseteq (N:_RM)uN
$$
$$
\supseteq v(N \cap u(N:_RM)M)=vu(N:_RM)M\supseteq vu^2N.
$$
So, $N$ is an $S$-idempotent submodule.

$(b) \Rightarrow (c)$.
Let $K$ be a submodule of $N$.  Since $N$ is an $S$-multiplication $R$-module, there exists an $s \in S$ such that $sK \subseteq (K:_RN)N$. As $N$ is $S$-idempotent, there is an $t\in S$ such that $tN \subseteq (N:_RM)^2M$. Therefore,
$$
tsK \subseteq t(K:_RN)N= (K:_RN)tN
$$
$$
\subseteq (K:_RN) (N:_RM)^2M=
(N:_RM)(K:_RN) (N:_RM)M
$$
$$
\subseteq (N:_RM)(K:_RN) N\subseteq (N:_RM)K.
$$

$(c) \Rightarrow (a)$. Let $I$ be an ideal of $R$. Since $IM \cap N
\subseteq N$, by part (c), there is an $s \in S$ such that $s(IM \cap N)\subseteq (N:_RM)(IM \cap N)$. Hence,
$$
(N \cap IM)(N:_RM)
\subseteq (N:_RM)IM=IN
$$
implies that $s(IM \cap N)\subseteq IN$. Thus, $N$ is an $S$-copure submodule of $M$.

$(b) \Rightarrow (d)$. Let $K$ be a submodule of $M$. Since $N$
is $S$-idempotent,  there is an $s \in S$ such that $sN\subseteq  (N:_RM)^2M$
$$
s(K:_RN)N\subseteq  (K:_RN)(N:_RM)^2M\subseteq (K:_RM)(N:_RM)M.
$$

$(d) \Rightarrow (b)$. Take $K=N$.
\end{proof}

\begin{cor}\label{c2.20}
Let $S$ be a multiplicatively closed subset of $R$ and $M$ be an $R$-module. Then we have the following.
\begin{itemize}
\item [(a)] If $M$ is a fully $S$-idempotent $R$-module,
  then $M$ is a fully $S$-pure $R$-module.
\item [(b)] If $M$ is an $S$-multiplication fully $S$-pure $R$-module,
  then $M$ is a fully $S$-idempotent $R$-module.
\end{itemize}
\end{cor}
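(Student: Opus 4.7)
The plan is to reduce both parts of Corollary \ref{c2.20} to direct applications of Theorem \ref{t2.18}, checking submodule by submodule and using the earlier results to supply the missing hypotheses.

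For (a), assume $M$ is fully $S$-idempotent. By Lemma \ref{l2.5}, $M$ is an $S$-multiplication $R$-module, so the standing hypothesis of Theorem \ref{t2.18} is satisfied. Now fix an arbitrary submodule $N$ of $M$. By the definition of fully $S$-idempotent, $N$ is $S$-idempotent in $M$; moreover, by Proposition \ref{p2.8}(c), $N$ is itself a fully $S$-idempotent $R$-module, and hence an $S$-multiplication $R$-module by a second application of Lemma \ref{l2.5}. Thus condition (b) of Theorem \ref{t2.18} holds for $N$, so the implication $(b)\Rightarrow(a)$ of that theorem gives that $N$ is $S$-pure in $M$. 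Since $N$ was arbitrary, $M$ is fully $S$-pure.

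For (b), assume $M$ is an $S$-multiplication, fully $S$-pure $R$-module. Fix a submodule $N$ of $M$; then $N$ is $S$-pure in $M$ by hypothesis, so condition (a) of Theorem \ref{t2.18} holds. The implication $(a)\Rightarrow(b)$ of Theorem \ref{t2.18} then yields, among other things, that $N$ is an $S$-idempotent submodule of $M$. As this holds for every submodule $N$, $M$ is fully $S$-idempotent.

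The only delicate point, and the reason (a) requires both Lemma \ref{l2.5} and Proposition \ref{p2.8}(c), is that part (b) of Theorem \ref{t2.18} demands $N$ to be an $S$-multiplication $R$-module in its own right, not merely $S$-idempotent as a submodule of $M$. This is the sole obstacle to a one-line proof; once that hypothesis is secured via the submodule-closure in Proposition \ref{p2.8}(c), the rest of the argument is bookkeeping inside Theorem \ref{t2.18}.
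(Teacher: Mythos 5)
Your proof is correct and follows essentially the same route as the paper: part (a) via Proposition \ref{p2.8}(c) and Lemma \ref{l2.5} to secure the $S$-multiplication hypotheses before invoking Theorem \ref{t2.18} $(b)\Rightarrow(a)$, and part (b) as a direct application of Theorem \ref{t2.18} $(a)\Rightarrow(b)$. Your explicit check that $M$ itself is $S$-multiplication (the standing hypothesis of Theorem \ref{t2.18}) is a point the paper leaves implicit, but the argument is the same.
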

\begin{proof}
(a) By Proposition \ref{p2.8},  every submodule of
$M$ is a fully $S$-idempotent $R$-module. Hence, by Lemma \ref{l2.5}, every submodule of
$M$ is an $S$-multiplication $R$-module. Now the result follows from Theorem \ref{t2.18} $(b) \Rightarrow (a)$.

(b) This follows from Theorem \ref{t2.18} $(a) \Rightarrow (b)$.
\end{proof}

Let $S$ be a multiplicatively closed subset of $R$ and $M$ be an $R$-module.
A submodule $N$ of $M$ is said to be \emph{copure} if $(N:_MI)=N+(0:_MI)$ for every ideal $I$ of $R$ \cite{AF09}.
A submodule $N$ of $M$ is said to be \emph {$S$-copure} if there exists an $s \in S$ such that $s(N:_MI)\subseteq N+(0:_MI)$ for every ideal $I$ of $R$ \cite{FF2023}. $M$  is said to be \emph{fully $S$-copure} if every submodule of $M$ is $S$-copure \cite{FF2023}.

\begin{prop} \label{p2.19}
Let $S$ a multiplicatively closed subset of $R$ and $M$ be an $S$-multiplication $R$-module. If
$N$ is an $S$-copure submodule of $M$, then $N$ is $S$-idempotent.
\end{prop}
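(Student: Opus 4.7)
The plan is to produce $s \in S$ with $sN \subseteq (N:_R M)^2 M$, since the reverse inclusion $(N:_R M)^2 M \subseteq N$ is automatic from the definition of $(N:_R M)$. The strategy is to chain the two hypotheses through the well-chosen ideal $I = (N:_R M)$.

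First I would extract from the $S$-multiplication hypothesis an element $s_2 \in S$ with $s_2 N \subseteq (N:_R M) M$. The definition only gives $s_2 N \subseteq JM \subseteq N$ for some ideal $J$, but $JM \subseteq N$ forces $J \subseteq (N:_R M)$, so one may upgrade $J$ to $(N:_R M)$.

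Next I would feed $I = (N:_R M)$ into the $S$-copure condition for $N$. The containment $(N:_R M) M \subseteq N$ collapses the colon $(N:_M I)$ to all of $M$, so $S$-copureness produces $s_1 \in S$ with $s_1 M \subseteq N + (0:_M (N:_R M))$. Multiplying through by $(N:_R M)$ annihilates the second summand, yielding the key inclusion
\[
s_1 (N:_R M) M \subseteq (N:_R M) N.
\]

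Chaining these gives $s_1 s_2 N \subseteq s_1 (N:_R M) M \subseteq (N:_R M) N$, and iterating once more,
\[
(s_1 s_2)^2 N \subseteq (N:_R M)(s_1 s_2 N) \subseteq (N:_R M)^2 N \subseteq (N:_R M)^2 M,
\]
so $s := (s_1 s_2)^2 \in S$ works. The crux I expect is the choice $I = (N:_R M)$: it is the unique ideal that simultaneously trivializes $(N:_M I)$ (so that $S$-copureness speaks about all of $M$) and lets multiplication by $(N:_R M)$ kill the unwanted annihilator summand $(0:_M I)$; once that step is in place, only the $S$-multiplication property is needed to bounce $N$ into $(N:_R M)M$ and finish.
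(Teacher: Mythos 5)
Your proposal is correct and follows essentially the same route as the paper: apply the $S$-copure condition to the ideal $I=(N:_RM)$ (so that $(N:_MI)=M$ and multiplying by $(N:_RM)$ kills the annihilator summand), obtain $s_1(N:_RM)M\subseteq (N:_RM)N$, and combine with the $S$-multiplication inclusion $s_2N\subseteq (N:_RM)M$ to land in $(N:_RM)^2M$. The only cosmetic difference is your final multiplier $(s_1s_2)^2$ versus the paper's $s_1s_2^2$; both lie in $S$ and both work.
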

\begin{proof}
Suppose that $N$ is an
$S$-copure submodule of $M$. Then
 there exists an $s \in S$ such that
$$
s(N:_M(N:_RM))\subseteq N+(0:_M(N:_RM)).
$$
This in turn implies that $sM\subseteq N+(0:_M(N:_RM))$. It follows that
$$
s(N:_RM)M\subseteq (N:_RM)N.
$$
As $M$ is an $S$-multiplication module,
there is an $t \in S$ such that $tN \subseteq (N:_RM)M$. Hence,
we have
$$
st^2N \subseteq st(N:_RM)M\subseteq (N:_RM)tN\subseteq (N:_RM)^2M,
$$
as needed.
\end{proof}

\textbf{Acknowledgments.} The author would like to thank Prof. Habibollah Ansari-Toroghy for his helpful suggestions and useful comments.

\end{document}